\documentclass[a4paper,12pt]{amsart}
\textwidth=13.5cm
\textheight=22cm

\usepackage{amssymb}
\usepackage{ifthen}
\usepackage{graphicx}
\usepackage{amsmath}
\usepackage[T1]{fontenc} 
\usepackage{color,soul}
\nonstopmode \numberwithin{equation}{section}
\setlength{\textwidth}{15cm} \setlength{\oddsidemargin}{0cm}
\setlength{\evensidemargin}{0cm} \setlength{\footskip}{40pt}
\pagestyle{plain}

\theoremstyle{plain}
\newtheorem{thm}{Theorem}[section]
\newtheorem{cor}{Corollary}[section]
\newtheorem{lem}{Lemma}[section]
\newtheorem{prop}{Proposition}

\newtheorem{conj}{Conjecture}

\newenvironment{customthm}[1]
  {\innercustomthm}
  {\endinnercustomthm}

\theoremstyle{definition}
\newtheorem{defn}{Definition}[section]

\newtheorem{prob}{Problem}
\newtheorem{rem}{Remark}[section]


\newcounter{minutes}\setcounter{minutes}{\time}
\divide\time by 60
\newcounter{hours}\setcounter{hours}{\time}
\multiply\time by 60
\addtocounter{minutes}{-\time}

\newcounter {own}
\def\theown {\thesection       .\arabic{own}}

\newenvironment{pf}[1][]{%
 \vskip 3mm
 \noindent
 \ifthenelse{\equal{#1}{}}%
  {{\slshape Proof. }}%
  {{\slshape #1.} }%
 }%
{\qed\bigskip}

\newcounter{alphabet}






\def\be{\begin{equation}}
\def\ee{\end{equation}}

\newcommand{\bee}{\begin{enumerate}}
\newcommand{\eee}{\end{enumerate}}

\newcommand{\blem}{\begin{lem}}
\newcommand{\elem}{\end{lem}}
\newcommand{\bthm}{\begin{thm}}
\newcommand{\ethm}{\end{thm}}
\newcommand{\bcor}{\begin{cor}}
\newcommand{\ecor}{\end{cor}}
\newcommand{\beg}{\begin{examp}}
\newcommand{\eeg}{\end{examp}}
\newcommand{\begs}{\begin{examples}}
\newcommand{\eegs}{\end{examples}}
\newcommand{\bdefn}{\begin{defn}}
\newcommand{\edefn}{\end{defn}}
\newcommand{\bprob}{\begin{prob}}
\newcommand{\eprob}{\end{prob}}
\newcommand{\bei}{\begin{itemize}}
\newcommand{\eei}{\end{itemize}}

\newcommand{\bcon}{\begin{conj}}
\newcommand{\econ}{\end{conj}}
\newcommand{\bcons}{\begin{conjs}}
\newcommand{\econs}{\end{conjs}}
\newcommand{\bprop}{\begin{prop}}
\newcommand{\eprop}{\end{prop}}
\newcommand{\br}{\begin{rem}}
\newcommand{\er}{\end{rem}}
\newcommand{\brs}{\begin{rems}}
\newcommand{\ers}{\end{rems}}
\newcommand{\bo}{\begin{obser}}
\newcommand{\eo}{\end{obser}}
\newcommand{\bos}{\begin{obsers}}
\newcommand{\eos}{\end{obsers}}
\newcommand{\bpf}{\begin{pf}}
\newcommand{\epf}{\end{pf}}
\newcommand{\ba}{\begin{array}}
\newcommand{\ea}{\end{array}}
\newcommand{\beq}{\begin{eqnarray}}
\newcommand{\beqq}{\begin{eqnarray*}}
\newcommand{\eeq}{\end{eqnarray}}
\newcommand{\eeqq}{\end{eqnarray*}}

\begin{document}

\title{Bohr type inequality for  {C}es\'{a}ro and {B}ernardi integral operator on simply connected domain}
\author{Vasudevarao Allu}
\address{Vasudevarao Allu,
School of Basic Sciences ,
Indian Institute of Technology Bhubaneswar,
Bhubaneswar-752050, Odisha, India.}
\email{avrao@iitbbs.ac.in}

\author{Nirupam Ghosh}
\address{Nirupam Ghosh,
Stat-Math Unit,
Indian Statistical Institute, Bnagalore,
Bangalore- 560059, Karnataka, India.}
\email{nirupamghoshmath@gmail.com}

\subjclass[2010]{Primary 30C45, 30C50, 40G05}
\keywords{Analytic functions, Bohr radius, {C}es\'{a}ro operator, {B}ernardi integral}

\def\thefootnote{}
\footnotetext{ {\tiny File:~\jobname.tex,
printed: \number\year-\number\month-\number\day,
          \thehours.\ifnum\theminutes<10{0}\fi\theminutes }
} \makeatletter\def\thefootnote{\@arabic\c@footnote}\makeatother

\begin{abstract}
In this article, we study the Bohr type  inequality for {C}es\'{a}ro operator and {B}ernardi integral  operator acting on the space of analytic functions defined on a simply connected domain containing the unit disk $\mathbb{D}$.
\end{abstract}

\maketitle
\pagestyle{myheadings}
\markboth{  Vasudevarao Allu and  Nirupam Ghosh }{Bohr type inequality of {C}es\'{a}ro and {B}ernardi integral operator on simply connected domain}

\section{Introduction}

Let $D(a; r)= \{z \in \mathbb{C}: |z - a| < r\}$ and $\mathbb{D} = D (0; 1)$ be the  unit disk in the complex plane $\mathbb{C}$. For a simply connected domain $\Omega$ containing $\mathbb{D}$, let $\mathcal{H}(\Omega)$ denote the class of analytic functions on $\Omega$, and let $$\mathcal{B}(\Omega)= \{f\in \mathcal{H}(\Omega) : f(\Omega) \subset \overline{\mathbb{D}} \}.$$ The Bohr radius \cite{Fournier-Ruscheweyh-2010} for the family $\mathcal{B}(\Omega)$ is defined to be the positive real number $R_{\Omega} \in (0, 1)$ given by
\begin{equation*}
R_{\Omega} = \sup \{r \in (0, 1): M_f(r) \leq 1 ~~~~ \mbox{for}~~~ f(z) = \sum_{n = 0}^{\infty} a_n z^n \in \mathcal{B}(\Omega), z\in \mathbb{D}\},
\end{equation*}
where $ M_f(r) = \sum_{n = 0}^{\infty} |a_n |r^n $  with $|z| = r $ is the majorant series associated with $f \in \mathcal{B}(\Omega)$ in $\mathbb{D}$. If $\Omega = \mathbb{D}$, then it is well-known that $R_{\mathbb{D}} = 1/3$, and it is descried precisely as follows:

\begin{customthm}{A}\label{niru-vasu-P8-theorem001}
If $f \in \mathcal{B}(\mathbb{D})$, then $M_f(r) \leq 1$ for $0\leq r \leq 1/3$. The number $1/3$ is best possible.
\end{customthm}

The inequality $M_f(r) \leq 1$ for $f\in \mathcal{B}(\mathbb{D})$, fails to hold for any $r> 1/3$. This can be seen by considering the function $\phi_{a}(z) = (a - z)/ (1 - a z)$ and take $a\in (0, 1)$ such that $a$ is sufficiently close to $1$.
Theorem \ref{niru-vasu-P8-theorem001} was originally obtained by H. Bohr \cite{Bohr-1914} in $1914$  for $0\leq r\leq1/6$. The optimal value $1/3$, which is called the Bohr radius for the unit disk, has been established by M. Riesz, I Schur and F. W. Weiner (see \cite{Sidon-1927}, \cite{Tomic-1962}). Over the past two decades there has been significant interest on the Bohr type inequalities, one may  see articles \cite{Abu-2010, abu-2011,  abu-2014, Ali-2017, alkhaleefah-2019, Ismagilov-Ponnusamy-2020, Kayumov-Ponnusamy-2017,  Kayumov-Ponnusamy-2019} and the references therein.\\

\bigskip
Besides the Bohr radius, there is  a notion of Rogosinski radius \cite{landau-Gaier-1986, Rogosinski-1923} which is described as follows: Let $f(z) = \sum_{n = 0 }^{\infty}a_n z^n \in \mathcal{B}(\mathbb{D})$ and the corresponding partial sum of $f$ is defined by $S_N(z) = \sum_{n = 0 }^{N -1}a_n z^n$.  Then, for every $N \geq 1$, we have
$\left|S_N(z)\right| < 1$ in the disk $|z| < 1/2$ and the radius $1/2$ is sharp. Motivated by Rogosinski radius, Kayumov and Ponnusamy \cite{Kayumov-Ponnusamy-arxiv} have considered the Bohr-Rogosinski sum as
$$
R^{f}_{N} := |f(z)| + \sum_{n = N}^{\infty}|a_n||z|^n
$$
for $f \in \mathcal{B}(\mathbb{D})$ and defined the Bohr-Rogosinski radius as the largest number $r> 0$ such that $R^{f}_{N} \leq 1$ for $|z|< r$. For a significant and an extensive research in the direction of Bohr-Rogosinki radius, we referred to  \cite{Ismagilov-Ponnusamy-2020, Kayumov-Ponnusamy-2018, Kayumov-Ponnusamy-2021} and the references therein.

\bigskip
A natural question arises ``Can we extend the Bohr type  inequality for certain complex integral operators defined on various function spaces?" The idea has been initiated for the classical {C}es\'{a}ro operator in \cite{Kayumov-Ponnusamy-2020, Kayumov-Ponnusamy-2021} and for {B}ernardi integral operator in \cite{Kumar-Sahoo-2021}. In \cite{Kayumov-Ponnusamy-2020,Kayumov-Ponnusamy-2021,  Kumar-Sahoo-2021} the  authors have studied the Bohr type and Bohr-Rogosinski type inequalities for {C}es\'{a}ro  operator and {B}ernardi integral operator defined on $\mathcal{B}(\mathbb{D})$.

{C}es\'{a}ro operator and its various generalizations have been extensively studied. For example, the boundness and compactness of {C}es\'{a}ro operator on different function spaces has been well studied. In the classical setting, for an analytic  function $f(z) = \sum_{n = 0}^{\infty}a_n z^n$ on the unit disk $\mathbb{D}$, the {C}es\'{a}ro operator is defined by \cite{Hardy-Littlewoow-1931} (see also \cite{Halmos-1965, Stempak-1994})
\begin{equation}\label{niru-vasu-P8-eq000a}
\mathcal{C}f(z) := \sum_{n = 0}^{\infty}\frac{1}{n + 1}\left( \sum_{k = 0}^{n}a_k\right) z^n = \int_{0}^{1} \frac{f(t z)}{1 - t z}\, dt.
\end{equation}
It is not difficult to show that for $f \in \mathcal{B}(\mathbb{D})$,
$$
\left|\mathcal{C}f(z) \right| = \left|\sum_{n = 0}^{\infty}\frac{1}{n + 1}\left( \sum_{k = 0}^{n}a_k\right) z^n \right| \leq \frac{1}{r}\ln{\frac{1}{1 - r}} \quad \mbox{for}\quad |z| = r.
$$
In 2020,  Kayumov {\it et al.} \cite{Kayumov-Ponnusamy-2020} have been established the following Bohr type inequality for {C}es\'{a}ro operator.
\begin{customthm}{B}\label{niru-vasu-P8-theorem002}
If $f \in \mathcal{B}(\mathbb{D})$ and $f(z) = \sum_{n = 0}^{\infty} a_n z^n$, then
$$
\mathcal{C}_{f}(r) = \sum_{n = 0}^{\infty}\frac{1}{n + 1}\left( \sum_{k = 0}^{n}|a_k|\right) r^n  \leq \frac{1}{r}\ln{\frac{1}{1 - r}}
$$
for $|z| = r \leq R,$  where $R = 0.5335\ldots$ is the positive root of the equation
$$
2 x = 3(1 -x)\ln{\frac{1}{1-x}}.
$$
The number $R$ is the best possible.
\end{customthm}

For an analytic function $f(z) = \sum_{n = m}^{\infty}a_n z^n$ on the unit disk $\mathbb{D}$, the {B}ernardi integral operator (see \cite{Miller-Mocanu-2020}) is defined by
$$
\mathcal{L}_{\beta}f(z) : = \frac{1 + \beta}{z^\beta}\int_{0}^{z} f(\xi) \xi^{\beta - 1} \, d \xi =(1 + \beta)\sum_{n = m}^{\infty}\frac{a_n}{\beta + n} z^n,
$$
where $\beta > -m$ and $m \geq 0$ is an integer.
It is worth to mention that  for each $|z| = r \in[0, 1)$, the integral representation for {B}ernardi integral operator yields the following for $f\in \mathcal{B}(\mathbb{D})$:
$$
\left|\mathcal{L}_{\beta}f(z) \right| = \left|(1 + \beta)\sum_{n = 0}^{\infty}\frac{a_n}{\beta + n} z^n \right|\leq (1 + \beta)\frac{r^m}{m +\beta},
$$
which is equivalent to the following expression
$$
\left|\sum_{n = m}^{\infty}\frac{a_n}{\beta + n} z^n \right|\leq \frac{r^m}{m +\beta}.
$$
Recently, Kumar and Sahoo \cite{Kumar-Sahoo-2021} have studied the following Bohr type inequality for  {B}ernardi integral operator.

\begin{customthm}{C}\label{niru-vasu-P8-theorem003}
Let $\beta > -m$. If $f(z) = \sum_{n = m}^{\infty} a_n z^n \in \mathcal{B}(\mathbb{D})$, then
$$
\sum_{n = m}^{\infty}\frac{|a_n|}{\beta + n} |z|^n \leq \frac{r^m}{m +\beta}
$$
for $|z| = r \leq R(\beta)$. Here $R(\beta)$ is the positive root of the equation
$$
\frac{x^m}{m + \beta} - 2 \sum_{n = m + 1}^{\infty} \frac{x^n}{n + \beta}=0
$$
that cannot be improved.
\end{customthm}


The maim aim of this paper is to find the sharp Bohr type inequality for {C}es\'{a}ro operator and {B}ernardi integral operator for functions in  the class
$\mathcal{B}(\Omega_{\gamma})$, where
$$
\Omega_{\gamma}:= \bigg \{z \in \mathbb{C} : \left|z + \frac{\gamma}{1 - \gamma} \right| < \frac{1}{1 - \gamma}\bigg\} \quad \mbox{for} \quad 0 \leq \gamma < 1.
$$
Clearly the unit disk $\mathbb{D}$ is always a subset of $\Omega_{\gamma}$. In 2010, Fournier and Ruscheweyh \cite{Fournier-Ruscheweyh-2010} extended the Bohr's inequality
for functions in  $\mathcal{B}(\Omega_{\gamma})$.

\bigskip


The following lemma by  Evdoridis {\it et al.} \cite{Evdoridis-Ponnusay-Rasila-2021} plays a crucial rule to prove our main results.

\begin{lem} \label{niru-vasu-P8-lem001}\cite{Evdoridis-Ponnusay-Rasila-2021}
For $\gamma\in [0, 1)$, let
$$
\Omega_{\gamma}:= \bigg \{z \in \mathbb{C} : \left|z + \frac{\gamma}{1 - \gamma} \right| < \frac{1}{1 - \gamma}\bigg\},
$$
and let $f$ be an analytic function in $\Omega_{\gamma}$, bounded by $1$, with the series representation $f(z) = \sum_{n = 0}^{\infty} a_n z^n$ in  $\mathbb{D}$. Then,
$$
|a_n| \leq \frac{1 - |a_0|^2}{ 1 + \gamma} \quad \mbox{for} \quad n\geq 1.
$$
\end{lem}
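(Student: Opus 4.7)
The plan is to reduce the estimate to the classical $\gamma=0$ case by pulling $f$ back to $\mathbb{D}$ via a conformal map $\mathbb{D}\to\Omega_\gamma$ that fixes the origin, and then recovering the $a_n$ through a binomial expansion.

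First I would introduce the M\"obius transformation
$$
\psi(\eta):=\frac{(1+\gamma)\eta}{1+\gamma\eta},
$$
which fixes $0$ and carries $\mathbb{D}$ conformally onto $\Omega_\gamma$ (the image of $|\eta|=1$ is exactly the circle $|z+\gamma/(1-\gamma)|=1/(1-\gamma)$). The composition $g:=f\circ\psi$ then lies in $\mathcal{B}(\mathbb{D})$ with $g(0)=a_0$. Writing $g(\eta)=\sum_{k=0}^{\infty}c_k\eta^k$, the $\gamma=0$ instance of the lemma gives
$$
|c_k|\leq 1-|a_0|^2\qquad(k\geq 1),
$$
which can be established by applying the Schwarz--Pick derivative estimate to the rotationally averaged function $\tilde g$ defined by $\tilde g(\eta^k)=k^{-1}\sum_{j=0}^{k-1}g(e^{2\pi ij/k}\eta)$; this $\tilde g$ lies in $\mathcal{B}(\mathbb{D})$ because $\mathbb{D}$ is rotation invariant, with $\tilde g(0)=a_0$ and $\tilde g'(0)=c_k$.

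Next, I would express the coefficients $a_n$ in terms of $c_1,\dots,c_n$ by inverting $\psi$. Solving $z=\psi(\eta)$ gives $\psi^{-1}(z)=z/\bigl((1+\gamma)-\gamma z\bigr)$, and hence
$$
f(z)=g(\psi^{-1}(z))=\sum_{k=0}^{\infty}\frac{c_k}{(1+\gamma)^k}\,z^k\Bigl(1-\tfrac{\gamma}{1+\gamma}z\Bigr)^{-k}.
$$
Expanding each factor $(1-\gamma z/(1+\gamma))^{-k}$ by the binomial series and collecting the coefficient of $z^n$ yields, for every $n\geq 1$,
$$
a_n=\frac{1}{(1+\gamma)^n}\sum_{k=1}^{n}\binom{n-1}{n-k}\gamma^{n-k}\,c_k.
$$

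Finally, taking absolute values, applying $|c_k|\leq 1-|a_0|^2$ uniformly in $k$, and invoking the elementary identity $\sum_{j=0}^{n-1}\binom{n-1}{j}\gamma^j=(1+\gamma)^{n-1}$ collapses the estimate to
$$
|a_n|\leq\frac{(1-|a_0|^2)(1+\gamma)^{n-1}}{(1+\gamma)^n}=\frac{1-|a_0|^2}{1+\gamma},
$$
which is the required bound. The only genuine obstacle is the combinatorial bookkeeping: arranging the binomial expansion so that $\sum_j\binom{n-1}{j}\gamma^j$ appears and produces precisely $(1+\gamma)^{n-1}$, cancelling all but one copy of $1+\gamma$. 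Once the conformal change of variables $\psi$ is in place, the analytic content needed is just the classical Schwarz--Pick lemma on $\mathbb{D}$.
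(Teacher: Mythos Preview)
Your argument is correct. The conformal map $\psi(\eta)=(1+\gamma)\eta/(1+\gamma\eta)$ does take $\mathbb{D}$ onto $\Omega_\gamma$ with $\psi(0)=0$, the rotational-averaging trick legitimately yields $|c_k|\le 1-|a_0|^2$ from Schwarz--Pick, and your binomial bookkeeping is accurate: the coefficient of $z^n$ in $\bigl(z/((1+\gamma)-\gamma z)\bigr)^k$ is indeed $\binom{n-1}{n-k}\gamma^{n-k}/(1+\gamma)^n$, and the binomial sum collapses exactly as you say.

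As for the comparison: the paper does not supply its own proof of this lemma at all. It is quoted verbatim from Evdoridis, Ponnusamy and Rasila (\emph{Results Math.}~2021) and used as a black box, so there is nothing in the present paper to set your argument against. Your write-up therefore goes beyond what the paper provides. One small presentational point: the sentence about $\tilde g$ is terse enough that a reader might pause over why $\tilde g$ is well defined as a function of $\eta^k$; it would be cleaner to say that $k^{-1}\sum_{j}g(e^{2\pi ij/k}\eta)=\sum_{m\ge 0}c_{mk}\eta^{mk}$ and then set $\tilde g(w)=\sum_{m\ge 0}c_{mk}w^m$, which is manifestly a self-map of $\overline{\mathbb{D}}$ with $\tilde g(0)=a_0$ and $\tilde g'(0)=c_k$.
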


\section{Main results}

We state and prove our first main result.

\begin{thm}\label{niru-vasu-P8-theorem005}
For $0 \leq \gamma < 1$, let $f \in \mathcal{B}(\Omega_{\gamma})$ with $f(z) = \sum_{n = 0}^{\infty} a_n z^n$ in $\mathbb{D}$. Then we have
\begin{equation}\label{niru-vasu-P8-eq000}
\mathcal{C}_{f}(r) = \sum_{n = 0}^{\infty}\frac{1}{n + 1}\left( \sum_{k = 0}^{n}|a_k|\right) r^n  \leq \frac{1}{r}\ln{\frac{1}{1 - r}}\quad \mbox{for}~~~ |z|= r \leq R_\gamma
\end{equation}
where $R_\gamma$ is the positive root of
$$
(3 + \gamma)(1 - x)\ln{\frac{1}{1 - x}} = 2 x.
$$
The number $R_\gamma$ is the best possible.
\end{thm}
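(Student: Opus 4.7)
The plan is to adapt the strategy used for Theorem B (the $\gamma = 0$ case in \cite{Kayumov-Ponnusamy-2020}): apply the coefficient estimate of Lemma \ref{niru-vasu-P8-lem001} to each $|a_n|$ with $n \geq 1$, substitute into the series defining $\mathcal{C}_f(r)$, and reduce the problem to a one-variable inequality in $a := |a_0|$. Setting $L(r) := \frac{1}{r}\ln\frac{1}{1-r}$ and $M(r) := \frac{1}{1-r}$ and using the elementary identities
$$\sum_{n=0}^\infty \frac{r^n}{n+1} = L(r), \qquad \sum_{n=0}^\infty \frac{n}{n+1}r^n = M(r)-L(r),$$
the bound $\sum_{k=0}^{n} |a_k| \leq a + n(1-a^2)/(1+\gamma)$ produces
$$\mathcal{C}_f(r) \leq a\, L(r) + \frac{1-a^2}{1+\gamma}\bigl(M(r) - L(r)\bigr).$$

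The target inequality $\mathcal{C}_f(r) \leq L(r)$ thus reduces to $(1+a)(M-L) \leq (1+\gamma)L$ after cancelling the common factor $(1-a)$ (the boundary case $a = 1$ forces $f$ to be constant by the maximum modulus principle, giving equality trivially). Since the left-hand side is monotone increasing in $a \in [0,1]$, the most restrictive requirement occurs at $a = 1$, i.e., $2(M(r)-L(r)) \leq (1+\gamma)L(r)$, which rearranges exactly to
$$(3+\gamma)(1-r)\ln\frac{1}{1-r} \geq 2r.$$
I would then show that $h_\gamma(x) := (3+\gamma)(1-x)\ln\frac{1}{1-x} - 2x$ vanishes at $0$, has derivative $h_\gamma'(x) = (3+\gamma)\ln(1-x) + (1+\gamma)$ with $h_\gamma'(0) = 1+\gamma > 0$, and $h_\gamma''(x) = -(3+\gamma)/(1-x) < 0$, so $h_\gamma$ is strictly concave; since $h_\gamma(x) \to -2$ as $x \to 1^-$, it admits a unique positive root $R_\gamma$ with $h_\gamma \geq 0$ on $[0, R_\gamma]$.

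For sharpness I would test the family $f_a(z) = (a - \psi_\gamma(z))/(1 - a\psi_\gamma(z))$, where $\psi_\gamma(z) = \gamma + (1-\gamma)z$ is the affine bijection $\Omega_\gamma \to \mathbb{D}$; hence $f_a \in \mathcal{B}(\Omega_\gamma)$ for every $a \in (0,1)$. A direct series expansion yields $|a_0| = (a-\gamma)/(1-a\gamma)$ and $|a_1| = (1-a^2)(1-\gamma)/(1-a\gamma)^2$, while the identity $(1-a\gamma)^2 - (a-\gamma)^2 = (1-\gamma^2)(1-a^2)$ gives $|a_1| = (1-|a_0|^2)/(1+\gamma)$, so Lemma \ref{niru-vasu-P8-lem001} is saturated at $n = 1$. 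The principal obstacle is that the higher coefficients of $f_a$ fall short of the Lemma \ref{niru-vasu-P8-lem001} bound by factors $\bigl(a(1-\gamma)/(1-a\gamma)\bigr)^{k-1}$, which tend to $1$ only as $a \to 1^-$. I would close this gap by a limiting argument: as $a \to 1^-$ with $|a_0| \to 1$, show that the difference between $\mathcal{C}_{f_a}(r)$ and the upper bound $aL + (1-a^2)(M-L)/(1+\gamma)$ becomes negligible compared to $L(r)$, so that for any fixed $r > R_\gamma$ one can choose $a$ close enough to $1$ to force $\mathcal{C}_{f_a}(r) > L(r)$, proving that $R_\gamma$ is optimal.
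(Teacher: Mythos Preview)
Your proof of the main inequality is correct and follows essentially the same route as the paper: apply Lemma~\ref{niru-vasu-P8-lem001} to each $|a_k|$, $k\ge 1$, obtain the same upper bound $aL(r)+\frac{1-a^2}{1+\gamma}\bigl(M(r)-L(r)\bigr)$, and reduce to the one-variable condition $2\bigl(M(r)-L(r)\bigr)\le(1+\gamma)L(r)$. Your factoring $(1-a)[\ldots]$ is equivalent to the paper's concavity argument for $P_{\gamma,r}(a)$; your added uniqueness analysis of $R_\gamma$ via the strict concavity of $h_\gamma$ is a nice touch that the paper leaves implicit.

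For sharpness you pick the same extremal family $f_a=\phi_a\circ\psi_\gamma$ as the paper, and you correctly identify that the coefficients fall short of the Lemma~\ref{niru-vasu-P8-lem001} bound by factors $\bigl(a(1-\gamma)/(1-a\gamma)\bigr)^{k-1}\to 1$. However, your closing sentence is imprecise in a way that matters: showing the difference between $\mathcal{C}_{f_a}(r)$ and the upper bound is ``negligible compared to $L(r)$'' (i.e.\ $o(1)$ as $a\to 1^-$) is \emph{not} sufficient, because the gap between the upper bound and $L(r)$ is itself only of order $1-a$. What you must establish is that the deficit is $o(1-a)$---in fact $O\bigl((1-a)^2\bigr)$. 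This follows since each coefficient $|A_k|$ already carries a factor $1-|a_0|^2=O(1-a)$, and the shortfall $1-\bigl(a(1-\gamma)/(1-a\gamma)\bigr)^{k-1}$ contributes another $O(1-a)$; the paper makes this explicit by writing $\mathcal{C}_{f_a}(r)=L(r)+\frac{1-a}{1-a\gamma}\cdot\frac{2r+(3+\gamma)(1-r)\ln(1-r)}{r(1-r)}+O\bigl((1-a)^2\bigr)$. With this sharpened error estimate your limiting argument goes through.
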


\begin{proof}
Let $|a_0| = a < 1$. A simple computation of the {C}es\'{a}ro operator in (\ref{niru-vasu-P8-eq000a})  shows that
\begin{align}\label{niru-vasu-P8-eq001}
\mathcal{C}_{f}(r) & = \sum_{n = 0}^{\infty}\frac{1}{n + 1}\left( \sum_{k = 0}^{n}|a_k|\right) r^n \\\nonumber
& = a \left(1 + \frac{r}{2} + \frac{r^2}{3}+ \cdots\right) + \sum_{n = 1}^{\infty}\frac{1}{n + 1}\left( \sum_{k = 1}^{n}|a_k|\right) r^n\\ \nonumber
& = \frac{a}{r}\ln{\frac{1}{1 - r}} +  \sum_{n = 1}^{\infty}\frac{1}{n + 1}\left( \sum_{k = 1}^{n}|a_k|\right) r^n.
\end{align}
Using Lemma \ref{niru-vasu-P8-lem001} in  (\ref{niru-vasu-P8-eq001}) we obtain the following estimation for the  {C}es\'{a}ro operator:

\begin{align}\label{niru-vasu-P8-eq005}
\mathcal{C}_{f}(r) & \leq \frac{a}{r}\ln{\frac{1}{1 - r}} +  \frac{1 - a^2}{1 + \gamma}\sum_{n = 1}^{\infty}\frac{n}{n + 1} r^n\\ \nonumber
&= \frac{a}{r}\ln{\frac{1}{1 - r}} +  \frac{1 - a^2}{1 + \gamma} \bigg( \frac{1}{1 - r} - \frac{1}{r}\ln {\frac{1}{1 - r}} \bigg).
\end{align}
Let $$P_{\gamma, r}(a) = \frac{a}{r}\ln{\frac{1}{1 - r}} +  \frac{1 - a^2}{1 + \gamma} \bigg( \frac{1}{1 - r} - \frac{1}{r}\ln {\frac{1}{1 - r}} \bigg).$$ Then twice differentiation of $P_{\gamma, r}$ with respect to $a$ shows that
\begin{equation*}
P''_{\gamma, r} (a)= \frac{-2}{1 + \gamma} \left( \frac{1}{1 - r} - \frac{1}{r}\ln {\frac{1}{1 - r}} \right) \leq 0
\end{equation*}
for all $a\in [0, 1)$ and for all $r \in [0, 1)$. Therefore, $P'_{\gamma, r}$ is a decreasing function  and hence we obtain
\begin{equation}\label{niru-vasu-P8-eq005aa}
P'_{\gamma, r} (a) \geq P'_{\gamma, r} (1)=  \frac{1}{r(1 - r)(1 + \gamma)}\left(-2r + (3 + \gamma)(1 - r)\ln \frac{1}{1 - r}\right) \geq 0
\end{equation}
for all $r \leq R_\gamma$. Thus, $P_{\gamma, r}(a)$ is  increasing  for $r \leq R_\gamma$ and for all $\gamma\in [0, 1)$ and hence

\begin{equation}\label{niru-vasu-P8-eq005a}
P_{\gamma, r}(a) \leq P_{\gamma, r} (1) = \frac{1}{r}\ln{\frac{1}{1 - r}} \quad \mbox{for all}\quad r \leq R_\gamma.
\end{equation}
Therefore, the  desired inequality (\ref{niru-vasu-P8-eq000}) follows (\ref{niru-vasu-P8-eq005a}).\\[3mm]

Now we show that the radius $R_\gamma$ cannot be improved. In order to prove the sharpness of the result, we consider  the function $G: \Omega_{\gamma} \rightarrow \mathbb{D} $ defined by $G(z) = (1 - \gamma) z + \gamma$ and $\psi: \mathbb{D}\rightarrow \mathbb{D}$ defined by
$$
\psi(z) = \frac{a - z}{1 - a z}
$$
for $a\in(0, 1)$. Then $f_\gamma = \psi \circ G$ maps $\Omega_\gamma$ univalently onto $\mathbb{D}$. A simple computation shows that
\begin{equation*}
f_\gamma (z) = \frac{a - \gamma -(1 - \gamma)z}{1 - a \gamma - a (1 - \gamma)z } = A_0 - \sum_{n = 1}^{\infty} A_n z^n,~~~ z\in \mathbb{D},
\end{equation*}
where $a \in (0, 1)$,
\begin{equation}\label{niru-vasu-P8-eq006}
A_0 = \frac{a - \gamma}{1 - a \gamma}~~~ \mbox{ and}~~~ A_n = \frac{1 - a^2}{a (1 - a \gamma)} \bigg( \frac{a (1 - \gamma)}{1 - a \gamma}\bigg)^n.
\end{equation}
For a given $\gamma \in [0, 1)$,  let $a > \gamma$. Then the Ces\'{a}ro operator on $f_\gamma$ shows that
\begin{align}\label{niru-vasu-P8-eq010}
\mathcal{C}f_\gamma(r) & = \sum_{n = 0}^{\infty}\frac{1}{n + 1}\left(\sum_{k = 0}^{n} |A_k| \right) r^n\\\nonumber
& = \frac{A_0}{r}\ln\frac{1}{1 -r} + \sum_{n = 1}^{\infty} \frac{1}{n + 1} \left(\sum_{k = 1}^{n} |A_k| \right) r^n.
\end{align}
By substituting $A_0$ and $A_n$ for $n \geq 1$ in (\ref{niru-vasu-P8-eq010}), we obtain
\begin{align}\label{niru-vasu-P8-eq015}
\mathcal{C}f_\gamma(r) & = \frac{a - \gamma}{r(1 - a \gamma)}\ln \frac{1}{1 - r} + \sum_{n = 1}^{\infty} \frac{1}{n + 1} \left(\sum_{k = 1}^{n} \frac{1 - a^2}{a (1 - a \gamma)} \left(\frac{a(1 - \gamma)}{1 - a \gamma}\right)^k \right) r^n\\\nonumber
& = \frac{a - \gamma}{r(1 - a \gamma)}\ln \frac{1}{1 - r}  + \frac{(1 - a^2)(1 - \gamma)}{(1 - a \gamma)^2}\sum_{n = 1}^{\infty} \frac{1}{n + 1} \left(\sum_{k = 1}^{n}  \left(\frac{a(1 - \gamma)}{1 - a \gamma}\right)^{k-1} \right) r^n\\\nonumber
& = \frac{a - \gamma}{r(1 - a \gamma)}\ln \frac{1}{1 - r}  + \frac{(1 + a )(1 - \gamma)}{(1 - a \gamma)}\sum_{n = 1}^{\infty} \frac{1}{n + 1} \left( 1 - \frac{a ^n (1 - \gamma)^n}{(1 - a \gamma)^n} \right)r^n.
\end{align}
Further simplification of  (\ref{niru-vasu-P8-eq015}) shows that
\begin{align*}
\mathcal{C}f_\gamma(r) & = \frac{a - \gamma}{r(1 - a \gamma)}\ln \frac{1}{1 - r}  + \frac{(1 + a )(1 - \gamma)}{r (1 - a \gamma)}\ln \frac{1}{1 - r} - \frac{1 + a}{a r}\ln \left(\frac{1}{1 - \frac{(1 - \gamma) a r}{1 - a \gamma} }\right)\\
& = \frac{1}{r}\ln\frac{1}{1 - r} + \frac{(1 - a)}{(1 - a \gamma)}\frac{2 r + (3 + \gamma)(1 - r)\ln (1 - r)}{r (1 - r)} + D_{a, \gamma}(r),
\end{align*}
where
\begin{align*}
D_{a, \gamma}(r) & = \frac{(3 - a)- \gamma(1 + a)}{1 - a \gamma} - 2 \frac{(1 - a)}{(1 - a \gamma)(1 - r)}  - \frac{1 + a}{a r}\ln \left(\frac{1}{1 - \left(\frac{(1 - \gamma) a r}{1 - a \gamma} \right)}\right)\\
& = \sum_{n = 1}^{\infty}\left( \frac{(3 - a)- \gamma(1 + a)}{1 - a \gamma} - 2 \frac{(1 - a)}{(1 - a \gamma)} - \frac{a^n (1 + a )(1 - \gamma)^{n + 1}}{(1 - a \gamma)^{n + 1}}\right) r^n\\
& = O((1 - a)^2) ~~~ \mbox{as}~~~ a \rightarrow 1.
\end{align*}
From (\ref{niru-vasu-P8-eq005aa}), we obtain $\left(-2r + (3 + \gamma)(1 - r)\ln \frac{1}{1 - r}\right) \geq 0$
for all $r \leq R_\gamma$ and hence
$$\frac{2 r + (3 + \gamma)(1 - r)\ln (1 - r)}{r (1 - r)} > 0 \quad \mbox{for} ~~~ r > R_\gamma.$$
These two facts show that the number cannot be improved.

\end{proof}

\begin{rem}
Since for $\gamma = 0$, the domain $\Omega_\gamma$ reduces to the unit disk $\mathbb{D}$, Theorem \ref{niru-vasu-P8-theorem002} is a direct consequence of Theorem  \ref{niru-vasu-P8-theorem005} when $\gamma = 0$.
\end{rem}

In the next result we study the Bohr type inequality for {B}ernardi integral operator for the class of analytic functions defined on $\Omega_\gamma$. 

\begin{thm}
For $0 \leq \gamma < 1$, let $f \in \mathcal{B}(\Omega_{\gamma})$ with $f(z) = \sum_{n = 0}^{\infty} a_n z^n$ in $\mathbb{D}$. Then for $\beta > 0$
\begin{equation*}
\sum_{n = 0}^{\infty}\frac{|a_n|}{n + \beta} r^n \leq \frac{1}{\beta} ~~~ \mbox{for}~~~ r \leq R_{\gamma, \beta}
\end{equation*}
where $R_{\gamma, \beta}$ is the positive root of
$$
\frac{1}{\beta} = \frac{2 }{1 + \gamma} \sum_{n = 1}^{\infty} \frac{r^n}{n + \beta}.
$$
The number $R_{\gamma, \beta}$ is the best  possible.
\end{thm}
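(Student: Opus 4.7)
My plan is to mirror the proof of Theorem \ref{niru-vasu-P8-theorem005}, with the Cesàro weights $1/(n+1)$ replaced by the Bernardi weights $1/(n+\beta)$. Set $T(r) := \sum_{n=1}^{\infty} r^n/(n+\beta)$; this series converges on $[0,1)$ with $T(0)=0$ and $T(r) \to \infty$ as $r \to 1^-$, so the defining equation $1/\beta = 2T(r)/(1+\gamma)$ has a unique root $R_{\gamma,\beta} \in (0,1)$.

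First I would set $a := |a_0| \in [0,1)$, split off the $n=0$ term, and apply Lemma \ref{niru-vasu-P8-lem001} to the remaining coefficients to obtain
\[
\sum_{n=0}^{\infty} \frac{|a_n|}{n+\beta}\, r^n \;\leq\; Q_{\gamma,\beta,r}(a) := \frac{a}{\beta} + \frac{1-a^2}{1+\gamma}\, T(r).
\]
Since $Q''_{\gamma,\beta,r}(a) = -2T(r)/(1+\gamma) \leq 0$, the derivative $Q'_{\gamma,\beta,r}$ is decreasing, so $Q'_{\gamma,\beta,r}(a) \geq Q'_{\gamma,\beta,r}(1) = 1/\beta - 2T(r)/(1+\gamma)$, which is nonnegative precisely for $r \leq R_{\gamma,\beta}$. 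Hence $Q_{\gamma,\beta,r}$ is increasing on $[0,1]$ and bounded above by $Q_{\gamma,\beta,r}(1) = 1/\beta$, giving the stated inequality.

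For sharpness I would reuse the extremal function $f_\gamma = \psi \circ G$ with coefficients $A_0, A_n$ from (\ref{niru-vasu-P8-eq006}) and analyze
\[
S(r) := \sum_{n=0}^{\infty} \frac{|A_n|}{n+\beta}\, r^n \;=\; \frac{A_0}{\beta} + \frac{1-a^2}{a(1-a\gamma)} \sum_{n=1}^{\infty} \frac{1}{n+\beta}\left(\frac{a(1-\gamma)r}{1-a\gamma}\right)^n
\]
as $a \to 1^-$. Writing $\epsilon := 1-a$ and using the Taylor expansions $A_0 = 1 - (1+\gamma)\epsilon/(1-\gamma) + O(\epsilon^2)$ and $(1-a^2)/(a(1-a\gamma)) = 2\epsilon/(1-\gamma) + O(\epsilon^2)$, I expect
\[
S(r) - \frac{1}{\beta} \;=\; \frac{(1+\gamma)\epsilon}{1-\gamma}\left[\frac{2}{1+\gamma}T(r) - \frac{1}{\beta}\right] + O(\epsilon^2),
\]
where the bracket is exactly the defining expression for $R_{\gamma,\beta}$. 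For any fixed $r > R_{\gamma,\beta}$ it is strictly positive, so $S(r) > 1/\beta$ for $\epsilon$ sufficiently small, confirming that $R_{\gamma,\beta}$ cannot be improved.

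The main technical point will be justifying the $O(\epsilon^2)$ remainder. Since the prefactor $(1-a^2)/(a(1-a\gamma))$ is already $O(\epsilon)$, I only need a pointwise $O(\epsilon)$ correction coming from replacing the argument $a(1-\gamma)r/(1-a\gamma)$ by $r$ inside the series, and the resulting $O(\epsilon)$ error sums to $O(\epsilon)$ on compact subsets of $[0,1)$ (the multiplier has a convergent majorant). The cleanest route, parallel to the $D_{a,\gamma}(r) = O((1-a)^2)$ calculation in Theorem \ref{niru-vasu-P8-theorem005}, is to expand term by term and exploit that $r$ is fixed strictly below $1$.
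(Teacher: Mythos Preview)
Your proposal is correct and follows essentially the same route as the paper: split off the constant term, apply Lemma~\ref{niru-vasu-P8-lem001}, maximize the resulting concave function of $a=|a_0|$ at $a=1$, and establish sharpness via the extremal map $f_\gamma=\psi\circ G$ expanded to first order in $1-a$. Your first-order coefficient $\frac{(1+\gamma)\epsilon}{1-\gamma}\bigl[\tfrac{2}{1+\gamma}T(r)-\tfrac{1}{\beta}\bigr]$ is in fact the correct one (the paper's assertion that its remainder $M_{a,\gamma,\beta}(r)$ is $O((1-a)^2)$ is slightly off for $\gamma>0$, but the missing linear piece has the same sign, so the sharpness conclusion is unaffected).
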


\begin{proof}
Let $|a_0| = a < 1$. Then
\begin{align}\label{niru-vasu-P8-eq020}
\sum_{n = 0}^{\infty}\frac{|a_n|}{n + \beta} r^n = \frac{a}{\beta} + \sum_{n = 1}^{\infty}\frac{|a_n|}{n + \beta} r^n.
\end{align}
In view of Lemma \ref{niru-vasu-P8-lem001} and  (\ref{niru-vasu-P8-eq020}), we  obtain
\begin{align*}
\sum_{n = 0}^{\infty}\frac{|a_n|}{n + \beta} r^n \leq \frac{a}{\beta}  + \frac{1 - a^2}{1 + \gamma}\sum_{n = 1}^{\infty}\frac{r^n}{n + \beta}.
\end{align*}
Let $$\Phi_{\gamma, \beta} (a) = \frac{a}{\beta}  + \frac{1 - a^2}{1 + \gamma}\sum_{n = 1}^{\infty}\frac{r^n}{n + \beta}.$$
Then twice differentiation of $\Phi_{\gamma, \beta}$ with respect to $a$ shows that
$$\Phi''_{\gamma, \beta} (a) = -\frac{2}{1 + \beta}\sum_{n = 1}^{\infty}\frac{r^n}{n + \gamma} \leq 0$$ for all $a \in [0, 1]$ and for all $r \in [0, 1)$. This implies that $\Phi'_{\gamma, \beta}$ is decreasing and
\begin{equation}\label{niru-vasu-P8-eq021}
\Phi'_{\gamma, \beta} (a) \geq \Phi'_{\gamma, \beta} (1) = \left(\frac{1}{\beta}  - \frac{2}{1 + \gamma} \sum_{n = 1}^{\infty}\frac{r^n}{(n + \beta)}\right) \geq 0
\end{equation}
for $r \leq R_{\gamma, \beta}$. Hence $\Phi_{\gamma, \beta} (a)$ is increasing for $r\leq R_{\gamma, \beta} $. Therefore for all $a \in [0, 1]$,
$$
\Phi_{\gamma, \beta}(a) \leq \Phi_{\gamma, \beta}(1) = \frac{1}{\beta} \quad \mbox{for}~~~ r \leq R_{\gamma, \beta}
$$
and hence
\begin{equation*}
\sum_{n = 0}^{\infty}\frac{|a_n|}{n + \beta} r^n \leq \frac{1}{\beta} \quad  \mbox{for}~~~ r \leq R_{\gamma, \beta}.
\end{equation*}
We now show that $R_{\gamma, \beta}$ cannot be improved. In order to do this, consider the function
\begin{equation*}
f_\gamma (z) = \frac{a - \gamma -(1 - \gamma)z}{1 - a \gamma - a (1 - \gamma)z } = A_0 - \sum_{n = 1}^{\infty} A_n z^n, \quad z\in \mathbb{D},
\end{equation*}
where $a \in (0, 1)$, and $A_n (n \geq 0)$ are given by (\ref{niru-vasu-P8-eq006}). For a given $\gamma \in [0, 1)$,  let $a > \gamma$. Then for $\gamma\in [0, 1)$ and $\beta \geq 1$, we have
\begin{align}\label{niru-vasu-P8-eq025}
\sum_{n = 0}^{\infty}\frac{|A_n|}{n + \beta} r^n & = \frac{A_0}{\beta} + \sum_{n = 1}^{\infty}\frac{|A_n|}{n + \beta} r^n \\\nonumber
& = \frac{a - \gamma}{(1 - a \gamma) \beta} + \sum_{n = 1}^{\infty}\frac{1 - a^2}{a (1 - a \gamma)} \left(\frac{a(1 - \gamma)}{1 - a \gamma}\right)^n \frac{r^n}{n + \beta}.
\end{align}
By a simple computation, from (\ref{niru-vasu-P8-eq025}), we obtain
\begin{align*}
\sum_{n = 0}^{\infty}\frac{|A_n|}{n + \beta} r^n = \frac{1 }{\beta} - (1 - a) \left( \frac{1}{\beta} - \frac{2 }{1 + \gamma} \sum_{n = 1}^{\infty} \frac{r^n}{n + \beta}\right) + M_{a, \gamma, \beta}(r),
\end{align*}
where
\begin{align*}
M_{a, \gamma, \beta}(r) & = -\frac{1}{\beta} + \frac{a - \gamma}{(1 - a \gamma)\beta}\\
& + \frac{(1 - a^2)}{a(1 - a \gamma)} \sum_{n = 1}^{\infty}\left(\frac{a(1 - \gamma)}{1 - a \gamma}\right)^n \frac{r^n}{n + \beta} + (1 - a)\left(\frac{1}{\beta} - \frac{2 }{1 + \gamma} \sum_{n = 1}^{\infty} \frac{r^n}{n + \beta}\right)\\
& = (a - 1)\left( \frac{\gamma(a + 1)}{(1 - a \gamma)\beta} + \frac{2}{1 + \gamma}\sum_{n = 1}^{\infty} \frac{r^n}{n + \beta}\right) + \frac{(1 - a^2)}{a(1 - a \gamma)} \sum_{n = 1}^{\infty}\left(\frac{a(1 - \gamma)}{1 - a \gamma}\right)^n \frac{r^n}{n + \beta}.
\end{align*}
Letting $a \rightarrow 1$, we obtain
$$
M_{a, \gamma, \beta}(r) = O ((1 - a)^2).
$$
Further from (\ref{niru-vasu-P8-eq021}), we obtain
$
\left(\frac{1}{\beta}  - \frac{2}{1 + \gamma} \sum_{n = 1}^{\infty}\frac{r^n}{(n + \beta)}\right) \geq 0
$
for all $r \leq R_{\gamma, \beta}$. Therefore
$$
\frac{1}{\beta} - \frac{2 }{1 + \gamma} \sum_{n = 1}^{\infty} \frac{r^n}{n + \beta} < 0 \quad \mbox{for }~~~ r > R_{\gamma, \beta}.
$$
These two facts show that $R_{\gamma, \beta}$ cannot  be improved.

\end{proof}

\section{Acknowledgement} The research of the first author is supported by SERB-CORE grant. The research of the second author is supported by the NBHM post-doctoral fellowship, Department of Atomic Energy (DAE), Government of India  (File No: $0204/16(20)/2020/\mbox{R}\&\mbox{D}-II/10)$.

\end{document}